\newtheorem{theorem}{Theorem}
\newtheorem{definition}{Definition}
\newcommand{\modu}{\mbox{mod }}
\newenvironment{proof}%
{\begin{description}\item[Proof:]}
{\hfill$\blacksquare$
\end{description}}
\begin{document}

\begin{center}

{ \Large \bf \vspace*{3cm} ~\\ Goldberg, Fuller, Caspar, Klug and Coxeter \\ 
and a general approach to \\local symmetry-preserving operations }

\bigskip

{ \bf Gunnar Brinkmann$^1$,  Pieter Goetschalckx$^1$, Stan Schein$^2$ }

\bigskip 
{\em 
~$^1$Applied Mathematics, Computer Science and Statistics\\ Ghent University
Krijgslaan 281-S9\\ 9000 Ghent, Belgium
\medskip

~$^2$California Nanosystems Institute and Department of Psychology\\ University of California\\ Los Angeles, CA 90095

}
\bigskip

Gunnar.Brinkmann@UGent.be, Pieter.Goetschalckx@UGent.be, Stan.Schein@gmail.com. 

\bigskip

(Received ...)

 \end{center}






\section*{\underline{Abstract}}

Cubic polyhedra with icosahedral symmetry where all faces are
pentagons or hexagons have been studied in chemistry and biology as
well as mathematics. In chemistry one of these is
buckminsterfullerene, a pure carbon cage with maximal symmetry,
whereas in biology they describe the structure of {\em spherical}
viruses. Parameterized operations to construct all such polyhedra were
first described by Goldberg in 1937 in a mathematical context and
later by Caspar and Klug -- not knowing about Goldberg's work -- in
1962 in a biological context.  In the meantime Buckminster Fuller also
used subdivided icosahedral structures for the construction of his
geodesic domes.  In 1971 Coxeter published a survey article that
refers to these constructions.  Subsequently, the literature often
refers to the {\em Goldberg-Coxeter construction}. This construction is
actually that of Caspar and Klug. Moreover, there are essential
differences between this (Caspar/Klug/Coxeter) approach and the approaches of
Fuller and of Goldberg.
We will sketch the different approaches and generalize
Goldberg's approach to a systematic one encompassing all local
symmetry-preserving operations on polyhedra.

\bigskip

{\bf Keywords:} polyhedra, symmetry preserving, chamber system, \\Goldberg-Coxeter operation

\bigskip

{\bf Author's contributions:} All authors contributed critically in ideas and writing.\\

{\bf Data accessibility:} No data was involved in this research.\\

{\bf Competing interests:} None.\\

{\bf Acknowledgements:} The authors want to thank Nico Van Cleemput (Ghent University) for helpful discussions.\\

{\bf Funding statement:} There was no external funding for this project.\\

\bigskip

\section*{Introduction}

In mathematics and chemistry the article by Caspar and Klug \cite{CK62} is less popular than
the ones by Coxeter  \cite{Coxeter_virus} and Goldberg \cite{Goldberg_polyhedra}. 
A reason might be that the former has a strong biological focus and 
is vague with regard to mathematical details. Indeed, for details it refers to another paper
\cite{CK62-nonexist} that is listed as {\em to be submitted} in the article but was never completed, as 
Caspar writes in a later comment  \cite{caspar-klug-comment} on \cite{CK62}.

The most commonly used source for information about the Caspar-Klug
construction is the article by Coxeter \cite{Coxeter_virus} in which the construction
is described in a more formal way – and without referring much to the
biological context.
In this article Coxeter mentions the works of Caspar
and Klug, of Buckminster Fuller \cite{dymaxionworld} and of Goldberg, but when describing the
method of covering an icosahedron with identical equilateral triangles cut out of the triangular lattice
-- the construction often cited as the Goldberg-Coxeter operation (see e.g. \cite{GoldbergCox34}
or \cite{wrapping}) -- he explicitly refers to Caspar and Klug.

The misunderstanding that this operation is identical to the much
earlier proposed operation by Goldberg might have been caused by the sentence in \cite{Coxeter_virus}: 
{\em ``Independently of Michael Goldberg, Caspar and Klug proposed the
  following rule for making a suitable pattern.''} But, although the
2-parameter description of the operation is the same, the equilateral
triangle used by Caspar and Klug is not even an intermediate step in
Goldberg's approach and is not what Goldberg -- who in fact works in
the duals, the hexagonal lattice and the dodecahedron -- glues onto
the dodecahedron. The results are the same (or, to be exact, duals of
each other), but the methods to obtain these results differ in
essential points. 

We will now describe the methods of Goldberg
\cite{Goldberg_polyhedra}, of Fuller \cite{dymaxionworld}, and
of Caspar and Klug \cite{CK62} in the order in which they were developed. Finally we will 
show how Goldberg's approach can be
modified to form a general approach to local operations
preserving symmetry.

\section*{The approach of Goldberg}

We will describe the approach of Goldberg in more modern language than the one in his article
from 1937. This language allows a more formal description and will make generalization easier. 
In this article we discuss graphs that are finite or infinite, have a finite degree of 
the vertices, are embedded in the sphere or the Euclidean plane so that all faces have an 
interior that is homeomorphic to an open disc, and have a boundary for each face that is a 
simple cycle. 
We will refer to a plane graph (or polyhedron if the graph is 3-connected) if a 
finite graph is embedded in the Euclidean plane or on the sphere. We will refer to a tiling if 
it is an infinite tiling of the Euclidean plane. (Other tilings will not be discussed in detail in 
this article.)     
As shown in Figure~\ref{fig:chambers} for the example of the hexagonal tiling of the Euclidean 
plane, we can obtain a barycentric subdivision of such a tiling by labeling each vertex with a 
$0$, by inserting one vertex within each edge and labelling it with a $1$, and by placing one 
vertex in the interior of each face and labelling it with a $2$.  
Then we connect the vertices with label $2$ to all of the vertices ($0$'s and $1$'s) in the boundary of the face 
in which it lies in the cyclic order around the boundary. This procedure can also be used when 
the boundary of each face is not a simple cycle and vertices occur more than once in the walk around
the boundary. In this case the result would be a multigraph.

By this method we can obtain a tiling where all faces are triangles with vertices labeled (in clockwise order around the boundary)
either $0,1,2$ (we can call them black) or $2,1,0$ (white), and each triangle shares only edges with triangles 
of another colour. We call such labelled triangles {\em chambers}.
Let $\Sigma = \langle \sigma_0,\sigma_1,\sigma_2 | \sigma_0^2=\sigma_1^2=\sigma_2^2=1\rangle$
denote the free Coxeter group with generators $\sigma_0,\sigma_1,\sigma_2$.
Now, if we define for $0\le i\le 2$ and two chambers $t,t'$ such that 
$t$ and $t'$ share an edge but do not share the vertex labelled $i$ that
$\sigma_i(t)=t'$, we have a chamber system.
In an abstract way, these chambers can also be defined as elements of the flag space of the polyhedron.
The flags are given by triples $(v,e,f)$, so that vertex $v$ is contained in edge $e$ and edge $e$ is contained in face $f$.
See \cite{DrBr96} and \cite{DressHuson87} for more detailed descriptions. The chamber system 
encodes the combinatorial
structure of the polyhedron completely, and the symmetry group of the polyhedron induces an
operation on the chamber system.

\begin{figure}
\begin{center}

\includegraphics[width=9cm]{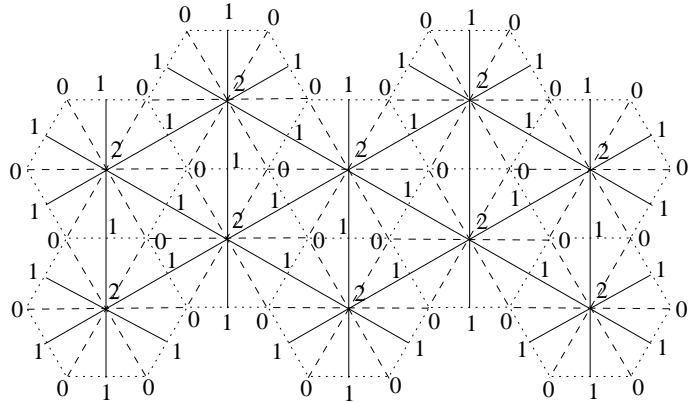}

\end{center}
\caption{The chamber system of the hexagonal tiling. Here and in the remainder of this article
dotted lines correspond to edges of the original tiling (not containing the vertex labelled $2$), 
full lines to connections between face centers and edge centers (not containing vertex $0$)
and dashed lines to connections between face centers and vertices (not containing vertex $1$). }\label{fig:chambers}
\end{figure}

While Caspar and Klug cut out triangular parts that fill complete faces of the triangulation
(e.g. of the icosahedron)
onto which they are glued, Goldberg proposes to cut out smaller triangles and 
to fill an $f$-gon with $2f$ copies of small triangles. Although he applies the method only to
pentagons, different from the Caspar-Klug method, his method can be applied to faces of any size.

Unfortunately Goldberg's article \cite{Goldberg_polyhedra} has a mistake. The construction only works for
achiral icosahedral fullerenes, which have the full icosahedral group of order $120$, and not -- as he claims -- also for 
chiral fullerenes with only the rotational subgroup of order $60$. He argues that for fullerenes
(or {\em medial polyhedra} as he calls them) that are
assembled from $12$ congruent {\em patches} that are pentagonal in shape we have that 
{\em ``Each pentagonal patch may be divided into ten equivalent (congruent or symmetric) triangular patches.''}
This statement is true for achiral icosahedral fullerenes but not for pentagonal patches of chiral icosahedral fullerenes.
The latter can be divided into five congruent triangular patches or ten triangular patches consisting of
two sets of five congruent triangles. The error is surprising because in his article he gives a picture 
(Figure~3 in \cite{Goldberg_polyhedra}) of the triangular
patch he claims to be sufficient for the chiral parameters $(5,3)$, and copies of this triangular patch
can obviously not be identified along edges to form a tiling.

Goldberg's method for cutting the patches out of the hexagonal lattice can thus 
(in a more formal and corrected way) be described as follows:
Assume that the regular hexagonal lattice is equipped with a coordinate system so that
$(0,0)$ is in the center of a hexagonal face $f$, $(1,0)$ is a vertex $v$ of $f$ and $(0,1)$ is
the vertex of $f$ that is obtained by a $60$ degrees rotation of $v$ in counterclockwise
direction around the origin.  By induction 
it can be easily proven that a vertex with integer coordinates $(a,b)$ is the center of a face
if $a-b \equiv 0 (\modu 3) $ and a vertex otherwise. Or, to be more precise, if $a-b \not\equiv 0 (\modu 3) $
each vertex is either
the left or the right vertex of an edge parallel to the vector $(1,0)$. If $a-b \equiv 1 (\modu 3) $,
it is the left vertex. If $a-b \equiv 2 (\modu 3)$, it is a right vertex.
If $(a,b)$ are not both integer but both
multiples of $\frac{1}{2}$, then $(a,b)$ is the center of an edge if $2(a-b)  \equiv 0 (\modu 3)$.

For given parameters $l,m\in  \mathbb{N}$ with $l\ge m\ge 0$, choose $v_2=(0,0)$,
$v_1=(\frac{l-m}{2},\frac{l+2m}{2})$ and $v_0=(l,m)$. The point $v_1$ is the midpoint
between $v_0$ and the image of $v_0$ when rotating it by $60$ degrees 
in counter clockwise direction around the origin $v_2$. See Figure~\ref{fig:70} for an example.
The resulting triangle is always a triangle with a $30$ degree angle at $v_2$, a $90$ degree
angle at $v_1$ and a $60$ degree angle at $v_0$. While $v_2$ is always the center of a face,
$v_0$ can be a vertex or the center of a face. For $v_1$ we have that 
$\frac{l-m}{2}-\frac{l+2m}{2}=\frac{-3m}{2}$. Both coordinates of $v_1$ are only integer if
$l$ and $m$ are both even. In this case $v_1$ is the center of a face. 
Otherwise $2\frac{-3m}{2}=-3m\equiv 0(\modu 3)$, so in that case $v_1$  is the center of an edge.

\begin{figure}
\begin{center}

\includegraphics[width=7cm]{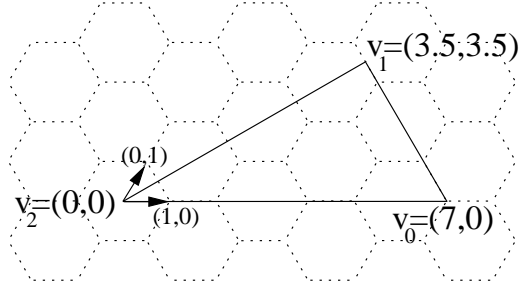}

\end{center}
\caption{The triangle used by Goldberg for the parameters $(7,0)$. }\label{fig:70}
\end{figure}

Although he used different words, Goldberg proposed to glue a copy of the 
interior of this triangle $v_0, v_1, v_2$, which we will call a {\em Goldberg triangle}, 
into each chamber 
of a polyhedron in such a way that the vertices $v_0, v_1, v_2$ are identified with the 
corresponding vertices of the chambers.
This construction
glues each side of a triangle against the same side in the mirror
image of a copy of the triangle. If the sides in the hexagonal lattice
are part of a mirror axis, the edges nicely match to form a tiling of
the sphere because they match in the hexagonal tiling. If they are
not, lines do not necessarily match.  
The side $v_2, v_0$ has direction $(l,m)$ which is part of a mirror axis through the origin
for $l ≥ m$ if and only if $m = 0$ or $l = m$.
If $m=0$ the side $v_2,v_1$ has
direction $(\frac{l}{2},\frac{l}{2})$ and starts at a face center, and
the side $v_1,v_0$ has direction $(\frac{l}{2},-\frac{l}{2})$ starting
at a face center ($l$ even) or the center of an edge parallel to this
direction.  If $l=m$, the side $v_2,v_1$ has direction
$(0,\frac{3l}{2})$ starting at a face center, and side $v_0,v_1$ has
direction $(-l,\frac{l}{2})$ also starting at a face center.  In each
case, all three sides of the triangle are mirror axes. When applied to
a polyhedron or tiling, new faces not containing a corner point of the
subdivided chamber are hexagons as in the lattice, and vertices not
forming a corner point remain 3-valent. 
At $v_1$ four copies are glued together in the same way as in the hexagonal lattice, so 
the local situation is exactly like that in the hexagonal lattice. 
As the dodecahedron is
3-valent, at $v_0$ six copies are glued together, just as they are
situated in the hexagonal lattice, so this gluing process will give a 3-valent vertex
if $v_0$ is a vertex and a hexagon if $v_0$ is the center of a face.
Only at $v_2$ can face sizes differ from $6$: In the resulting polyhedron, the vertex $v_2$ will
be the center of a face of the same size as before, thus a pentagon in
the case of decorating the dodecahedron.

If $l\not= m$ and $m>0$, the result of the gluing process can give disconnected graphs or double edges, because the
triangles are glued to parts that are different from the parts they are glued to in the hexagonal lattice.
For $l\not= m$ and $m>0$, the sides are not mirror axes, so the content of the congruent triangles on
the other side of edges is different (Figure~\ref{fig:53}). 
To this end, in addition to $v_0,v_1$, and $v_2$ another vertex $v'_0$ with coordinates $(-b,a+b)$ must be used. 
We mark the triangle $v'_0,v_2,v_1$ black, whereas we mark the triangle $v_0,v_1,v_2$ white.
Gluing copies of the black triangles to all black triangles of the chamber system of
the dodecahedron and copies of the white triangles to all white triangles of the chamber system of
the dodecahedron produces the desired icosahedral fullerene. The mirror symmetries of the dodecahedron
are lost, as the content of black and white triangles cannot be mapped onto each other by a reflection,
but all orientation-preserving symmetries remain. The facts that $v_1$ is a center of a 2-fold rotation,
that $v_2$ is a center of a 6-fold rotation, and that $v_0$ and $v'_0$ are centers of 3-fold rotations ensures
that the triangles match up. As each $n$-gon contains $n$ black and $n$ white triangles that can be paired,
it is not necessary to distinguish between the white and the black triangles, but the larger quadrangle $v_2,v_0,v_1,v'_0$
(that has in fact a triangular shape due to the angle of $180$ degrees at $v_1$), 
containing a black triangle and a white triangle, can be used to decorate the paired chambers in the polyhedron.

\begin{figure}
\begin{center}

\includegraphics[width=7cm]{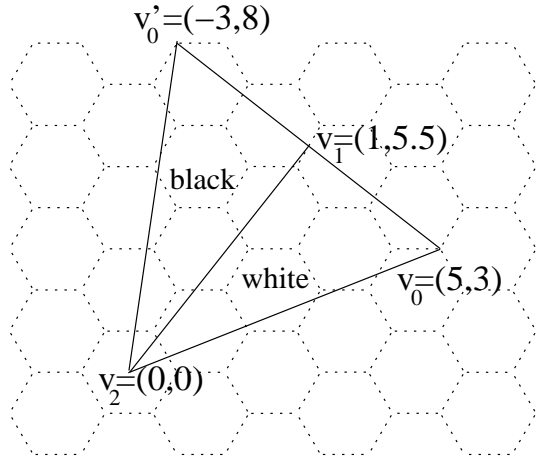}

\end{center}
\caption{The triangles Goldberg should have used for the parameters $(5,3)$. In his article he only uses
the triangle $v_0,v_1,v_2$.}\label{fig:53}
\end{figure}

For a more formal way to describe the combinatorial structure of the resulting fullerene or for
implementing the operations in a computer program, it is better
to switch to chamber systems completely. The symmetry group operates on the chambers of the
hexagonal tiling, and the barycentric subdivision 
can be chosen in a way that complete chambers are contained 
in the Goldberg triangle. This way the
gluing of the Goldberg triangle comes down to subdividing a chamber of the dodecahedron
into smaller chambers. In the chiral case the set of chambers
of the hexagonal tiling can be chosen in various ways as fundamental domains of the group generated
by the rotations required for the sides of the triangles. 
For a given barycentric subdivision it is possible that when drawing the triangle with straight lines chambers are 
intersected and only partially contained in the triangle. 
The set of chambers must be chosen in a way that the boundary is a simple cycle
through $v_0,v_1,v'_0,v_2$, so that the path from $v_1$ to $v_0$ is the image of  the path from $v_1$ to $v'_0$
under a rotation of $180$ degrees around $v_1$ and that  the path from $v_2$ to $v'_0$ is the image of  
the path from $v_2$ to $v_0$ under a rotation of $60$ degrees around $v_2$.
Figure~\ref{fig:53chamber}
gives an example for parameters $(5,3)$.

Details on operations not necessarily preserving orientation reversing symmetries will be given in
a later article. 

\begin{figure}
\begin{center}

\includegraphics[width=7cm]{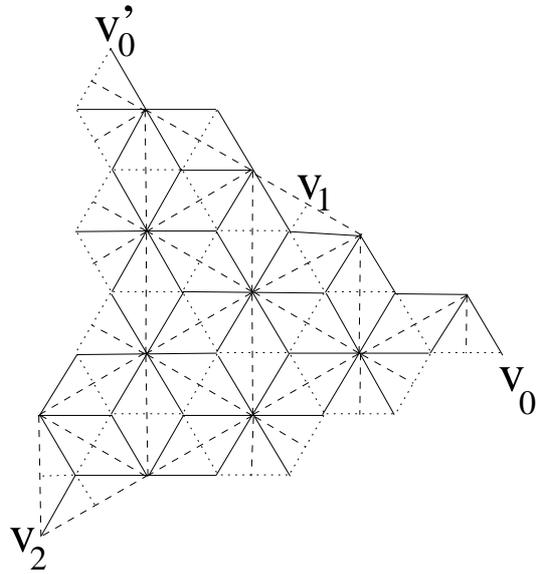}

\end{center}
\caption{The two triangles for the parameters $(5,3)$ described as chambers.
As before, dotted lines correspond to edges of the original tiling (not containing the vertex labelled $2$), 
full lines to connections between face centers and edge centers (not containing vertex $0$)
and dashed lines to connections between face centers and vertices (not containing vertex $1$). }\label{fig:53chamber}
\end{figure}

\section*{The approach of Buckminster Fuller}

The first geodesic dome is believed to be the provisional dome of
the Zeiss-Planetarium on the roof of the Zeiss factory in Jena around 1924. 
The structure was designed by Walter Bauersfeld, an optical engineer who worked for
Zeiss, and was patented and constructed by the firm of Dykerhoff and Wydmann.
It was already based on a subdivided icosahedron in order to get close to a round surface
suitable for the projection of the stars.

Fuller reinvented this idea about 20 years later and popularized it in the United States.
Nowadays it is mainly his name that is associated with geodesic domes.
Refinements of the icosahedron and the cuboctahedron that he called {\em vector equilibrium} or 
{\em Dymaxion} (see Figure~\ref{fig:dymaxion}) were also used for the early version of his
{\em Dymaxion map} -- a map of the world, projected onto the surface of an inscribed
polyhedron. 

\begin{figure}
\begin{center}

\includegraphics[width=3cm]{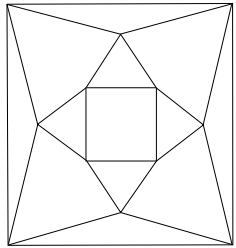}

\end{center}
\caption{A Schlegel diagram of the Dymaxion polyhedron (the cuboctahedron) used by Buckminster Fuller. }\label{fig:dymaxion}
\end{figure}

As his work preceded that of Caspar and Klug, it is of course interesting to know whether for his refinements
of the icosahedral and Dymaxion structures he had already used the methods of Caspar and Klug based on the 
corresponding Euclidean lattices.
In \cite{dymaxionworld}(p. 50) Fuller and Marks write as follows:
\medskip

{\em ``The geometric principles underlying the Dymaxion map are the same as those used to develop
the basic pattern of Fuller's domes.''} 
\medskip

and
\medskip

{\em ``To produce Fuller's Dymaxion map, we reverse this process. We start with a sphere, on whose surface a
spherical icosahedron has been drawn. Next we subtriangulate the icosahedron's 20 triangular faces
with symmetric, three way, great circle grids of a chosen frequency. Then we transfer this figure's configuration
of points to the faces of an ordinary (non-spherical) icosahedron which has been symmetrically subtriangulated
in frequency of modular subdivision corresponding to the frequency of the spherical icosahedron's subdivisions.''} 
\medskip

So Fuller's construction was less general. He {\em subtriangulated} the triangles without 
referring to the triangular lattice and without the
possibility to produce chiral structures where some smaller triangles cross the edges of the original
icosahedron.

\section*{The approach of Caspar and Klug}

Caspar and Klug use the Euclidean triangular lattice equipped with a
coordinate system, so that -- with the origin in a vertex -- the
points $(0,1)$ and $(1,0)$ are on vertices neighbouring the origin,
and $(0,1)$ can be obtained from $(1,0)$ by a $60$ degrees rotation in
counterclockwise direction around the origin. In this lattice the
vertices are exactly the points with both coordinates integers
(Figure~\ref{fig:CasparKlug}).  Note that when we use the coordinate
system of Goldberg that is defined in the dual hexagonal lattice
(Figures~\ref{fig:70} and \ref{fig:53}), the length of the unit
vectors used by Goldberg is shorter by a factor of $\sqrt{3}$ than that of the
vectors used by Caspar and Klug.

\begin{figure}
\begin{center}

\includegraphics[width=7cm]{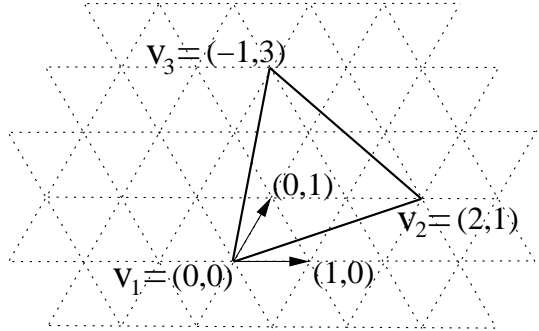}

\end{center}
\caption{The triangle used by Caspar and Klug for the parameters $(2,1)$. }\label{fig:CasparKlug}
\end{figure}

Caspar and Klug propose to look at the triangle formed by the vertices $v_1=(0,0)$, $v_2=(a,b)$ and $v_3=(-b,a+b)$
for integer values $a\ge b \ge 0$. This triangle has all corners in vertices, and vertices of the triangular lattice
are centers of rotational symmetries of the tiling by $60$ degrees. The product of two rotations by $60$ degrees in clockwise
order -- first around $v_1$, then around $v_2$ -- is a rotation
by $120$ degrees around the center of the
chosen triangle, proving that this rotation is a symmetry of the triangle, its interior, and the tiling.

These observations show that however we join identical copies of this Caspar-Klug triangle 
by identifying the copies pairwise along a side 
to form a closed structure, 
all faces will be triangles and all vertices that are 
not a corner of a Caspar-Klug triangle will have degree $6$. If the triangles
are glued into a triangulation of some surface, the degrees of the vertices of the original triangulation
stay the same.

Automorphisms of a triangulation of the sphere map triangles to triangles. As the Caspar-Klug triangles always
have the full set of orientation-preserving symmetries of the triangle, all orientation-preserving symmetries
can be extended to the interior of the triangles. In other words: each orientation-preserving automorphism of the
triangulation is also an automorphism of the decorated triangulation. 

If (and only if) the parameters $(a,b)$ fulfill $b=0$ or $a=b$, the Caspar-Klug triangles also have mirror
axes perpendicular to the midpoints of the edges. In this case the Caspar-Klug triangles have the full symmetry group
of the triangle, and all automorphisms of a triangulation of the sphere that is decorated can be extended
to the decorated triangulation, so that the whole automorphism group is preserved.

\section*{A general approach to local symmetry-preserving \\ operations on polyhedra}

The use of operations on polyhedra possibly dates back to 
the ancient Greeks, who were the first to describe
the Platonic solids and the Archimedean solids that can be constructed
from the Platonic solids by simple operations that preserve the
symmetry of the original object. 
When he rediscovered the Archimedean solids in
his book {\em Harmonices Mundi} \cite{Kepler}, Johannes Kepler
coined the names now used for the Archimedean solids. The name
{\em truncated octahedron} shows clearly that he considered 
this polyhedron to be constructed by truncation -- a symmetry-preserving
operation -- from the octahedron.

Although these operations are often used and 
well-studied in mathematics, there currently exists no
systematic way to describe them. There exists an extensive naming scheme
using terms like {\em ambo}, {\em kis}, {\em truncate}, {\em cantellate},
{\em runcinate}\ etc., and for several subclasses there exist
different techniques to describe them, e.g. the Conway polyhedron notation\cite[Chapter
  21]{CBGS:2008}, Schl\"afli symbols \cite{Coxeter73} or   Wythoff symbols\cite{CLM:1954}.
Nevertheless there is no definition of {\em local symmetry-preserving operations} and therefore
also no technique to systematically describe all possible
local symmetry-preserving operations or theorems about what 
local symmetry-preserving operations can do or cannot do. A list of popular operations
can also be found on the Wikipedia site for the Conway notation \cite{conway_notation}.

In addition to these mathematically motivated operations that produce polyhedra from polyhedra, 
there is a long tradition in art and design of decorating polyhedra and other 
objects with parts of periodic tilings. Most famous are probably the {\em Sphere with fish}
and the {\em Sphere with Angels and Devils} by M.C. Escher. For more information 
and examples
see \cite{escherbox} and \cite{decpolyhedra}.

While operations on polyhedra were mainly interpreted as manipulating the solid object
(e.g. truncation as cutting off vertices and thereby producing new faces),
the results of the decorations were interpreted as decorated polyhedra -- and not 
just new and different polyhedra. Only by interpreting decorations
as combinatorial operations does it become clear how closely these two approaches
are connected.

The term {\em symmetry-preserving} can easily be transformed into an
exact requirement: the symmetry group of the polyhedron to which the
operation is applied must be a subgroup of the symmetry group of the
polyhedron that is the result of the operation. In most cases it will
be the whole symmetry group of the result.  This goal could also be
obtained by taking global properties into account -- e.g. the symmetry
group itself -- so the terms {\em local} and {\em operation} are still to be
made precise. The classical operations like {\em truncation} are
obviously what must be included in the definition, so checking that
all classic operations are covered by the definition is a first test of the usefulness of the
following definition.

\begin{definition}\label{def:chamber}

Let $T$ be a periodic 3-connected tiling of the Euclidean plane with chamber system $C_T$
that is given by a barycentric subdivision that is invariant under the symmetries of $T$.
Let $v_0,v_1,v_2$ be points in the Euclidean plane
so that for $0\le i < j \le 2$ the line $L_{i,j}$ through $v_i$ and $v_j$ is a mirror axis of the tiling.

If the angle between $L_{0,1}$ and $L_{2,1}$ is $90$ degrees, the angle between 
$L_{2,1}$ and $L_{2,0}$ is $30$ degrees and consequently the angle between 
$L_{0,1}$ and $L_{0,2}$ is $60$ degrees, then the triangle $v_0,v_1,v_2$ 
subdivided into chambers as given by $C_T$ and
the corners $v_0,v_1,v_2$ labelled with their names $v_0,v_1,v_2$ is called 
a local symmetry-preserving operation, {\em lsp operation} for short.

The result of applying an lsp operation $O$ to a tiling or polyhedron $P$ is
given by subdividing each chamber $C$ of the chamber system $C_P$ of $P$ with $O$ by
identifying for $0\le i\le 2$ the vertices of $O$ labelled $v_i$ with the vertices labelled $i$ 
in $C$. Note that this operation is purely combinatorial and that the
angles of $O$ are not preserved.

\end{definition}

The choice of $30$ degrees, resp. $60$ degrees is a reference to
Goldberg's paper.  For people familiar with tilings and Delaney-Dress
symbols as in \cite{DressHuson87}, it is immediately clear that one could also
interchange the values, require $45$ degrees for both or even choose
other values and go to hyperbolic or spherical tilings and still get
the same set of operations.  For example, if one would use $45$
degrees for both angles, the dual would come from the regular square
tiling but would be the same combinatorial operation. As another
example Figure~\ref{fig:4545truncate} shows the tiling from which
truncation can be obtained with two angles of $45$ degrees.

An alternative way to understand that different choices of angles give the same set of operations is as follows:
Let $T_0$ be a tiling of the sphere or the Euclidean or hyperbolic plane where the symmetry group acts 
transitively on the chambers. Then any lsp operation as defined in Definition~\ref{def:chamber}
can be applied to $T_0$ to give another tiling $T_1$
from which the operation can be recovered by requiring the set of angles as in the chambers of $T_0$.

\begin{figure}
\begin{center}

\includegraphics[width=7cm]{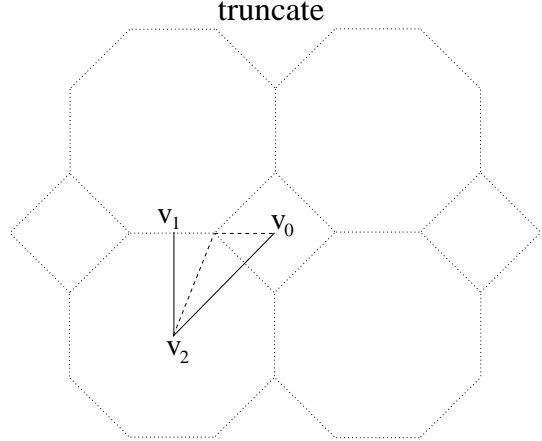}

\end{center}
\caption{ The operation {\em truncation} for two angles of $45$ degrees 
instead of $30$ degrees and $60$ degrees. }\label{fig:4545truncate}
\end{figure}

The symmetry group of the tiling $P$ is an automorphism of the chamber
system $C_P$.  This implies immediately that it is also an
automorphism of the result $O(P)$ justifying the term {\em symmetry-preserving operation}. The fact that operations are defined on the
level of chambers justifies the term {\em local}. Nevertheless one
must ask whether all operations one wants to call {\em local symmetry-preserving operation} are covered. We motivate our definition by
showing that all well known and often used operations on polyhedra are
covered by our definition.  In Figure~\ref{fig:ops} we give the
chamber operations for the operations {\em identity}, {\em dual}, 
{\em  ambo}, {\em truncate}, {\em chamfer}, and {\em quinto}.  Other operations (like
    {\em join}, {\em kis}, {\em expand}, {\em otho}, {\em bevel}, {\em meta}, etc.) can be written as
    products of these operations.


\begin{figure}
\begin{minipage}{5cm}
\begin{center}
\includegraphics[width=5cm]{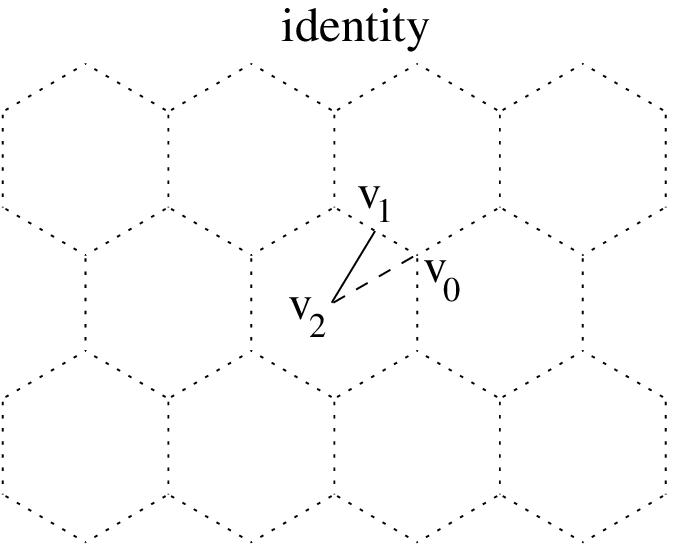}
\end{center}
\end{minipage} \hfill
\begin{minipage}{6cm}
\begin{center}
\includegraphics[width=6cm]{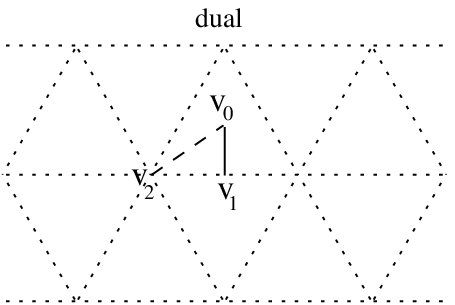}
\end{center}
\end{minipage} 

\begin{minipage}{7.5cm}
\begin{center}
\includegraphics[width=7.5cm]{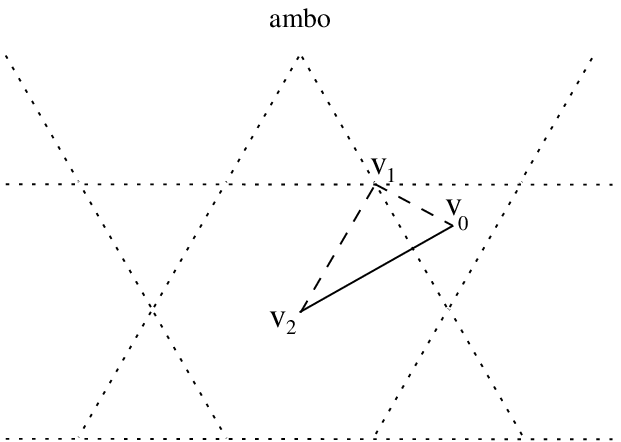}
\end{center}
\end{minipage}  \hfill
\begin{minipage}{8cm}
\begin{center}
\includegraphics[width=7.0cm]{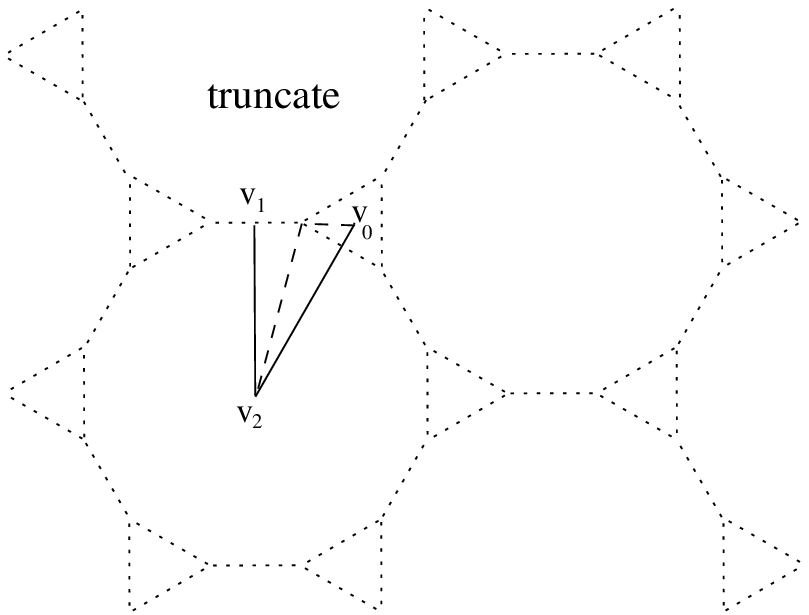}
\end{center}
\end{minipage} 

\begin{minipage}{7cm}
\begin{center}
\includegraphics[width=7cm]{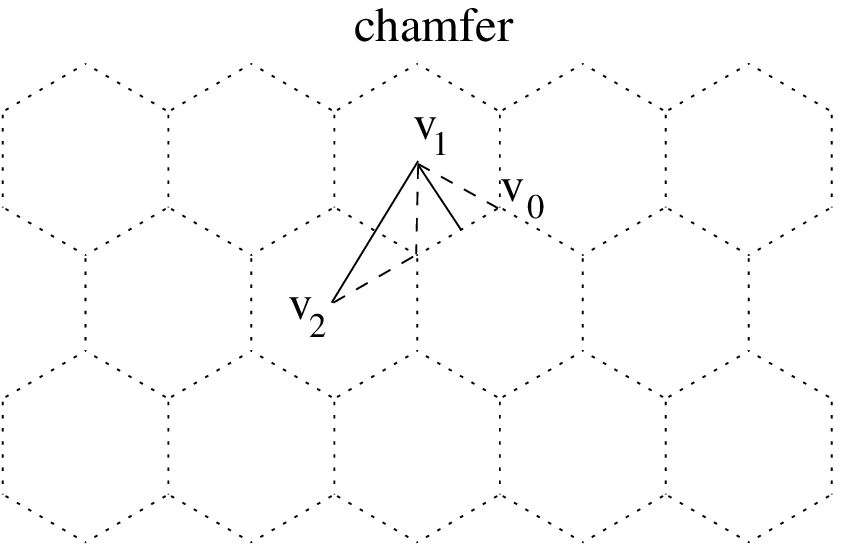}
\end{center}
\end{minipage}   \hfill
\begin{minipage}{8cm}
\begin{center}
\includegraphics[width=7.5cm]{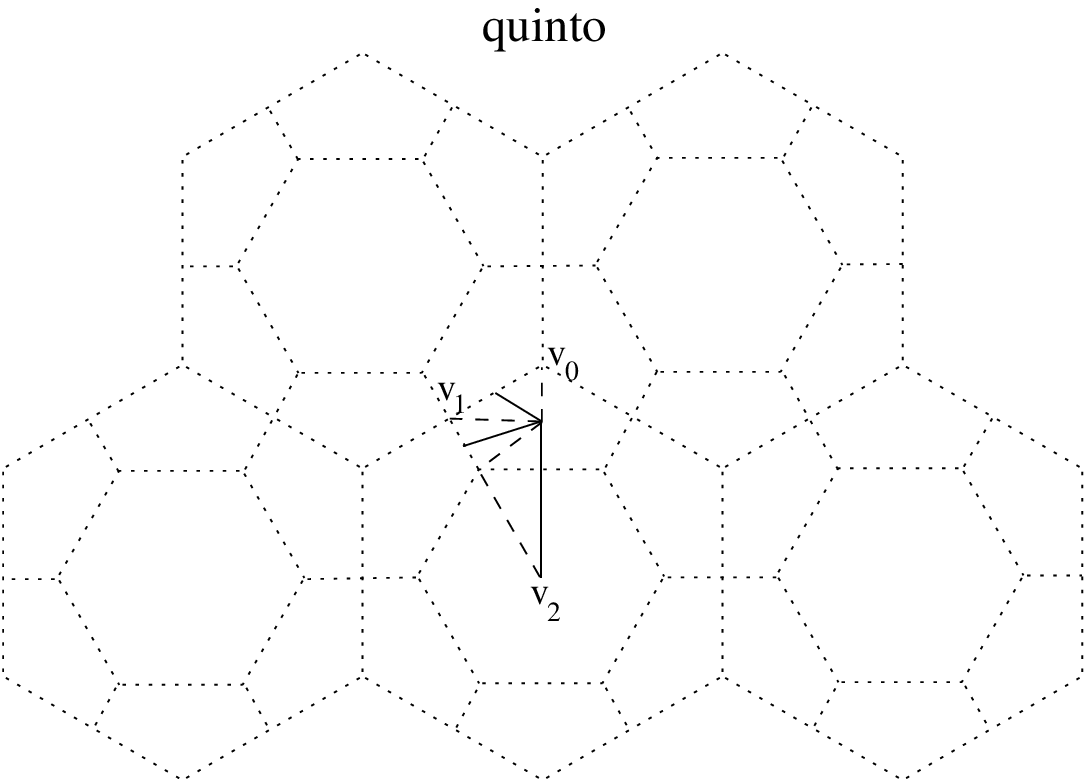}
\end{center}
\end{minipage} 
\caption{ The operations identity, dual, ambo, truncate, chamfer, and
  quinto. For better visibility of the operations, the chambers are
  only drawn inside the triangular region defining the
  operation.}\label{fig:ops}
\end{figure}

Though initially described as purely geometrical operations, lsp operations have 
also been studied combinatorially 
(see e.g. \cite{RiosFrancos2014}) and have been described as subdivisions of chambers
of the  tiling or polyhedron.
Figure~7 in \cite{RiosFrancos2014} describes how the chamfering operation can be 
implemented as a chamber operation and can be applied to a map
given as an abstract chamber system. Representing operations on  polyhedra as chamber operations means that
the operation is given and coded or described as a chamber operation, so the operation
defines the decomposition of the chamber. Definition~\ref{def:chamber} establishes
the reverse direction: it says which decompositions of chambers into smaller chambers define
an operation.

This characterization is necessary as 
not every subdivision of a chamber defines an operation that transforms a chamber system into another
chamber system of a tiling or polyhedron -- see Figure~\ref{fig:noop} for an easy example. One of the properties
of chamber systems of tilings or polyhedra is that each $1$-vertex 
is contained in exactly $4$ chambers. Applying the subdivision in Figure~\ref{fig:noop} to a  tiling or
polyhedron, the result would not have this property.

\begin{figure}
\begin{center}

\includegraphics[width=2.5cm]{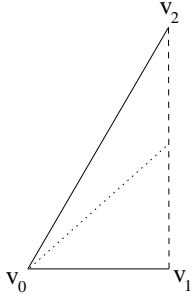}

\end{center}
\caption{ A subdivision of a chamber that does not define an lsp operation. }\label{fig:noop}
\end{figure}

While in \cite{CK62} and \cite{Goldberg_polyhedra} it is simply
assumed that the result of applying the construction to a dodecahedron
or an icosahedron is again a polyhedron, this assumption should be
proven in general. In order to justify our approach to lsp 
operations we have to prove that after applying such an operation to a polyhedron,
the result is again a polyhedron. 
The theorem of Steinitz 
allows translation of this requirement to a purely combinatorial theorem:

\begin{theorem}\label{thm:3con}
If $P$ is a polyhedron and $O$  an lsp operation,
then $O(P)$ is a polyhedron.
\end{theorem}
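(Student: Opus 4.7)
By Steinitz's theorem, it suffices to show that $O(P)$ is a simple, $3$-connected, planar graph. Planarity is immediate from the construction: the subdivided triangle defining $O$ is embedded into each chamber of $P$ by identifying the corners $v_0, v_1, v_2$ with the $0$-, $1$-, and $2$-labelled vertices of the chamber, giving an embedding of $O(P)$ on the sphere. My plan is to verify that the copies of $O$ in adjacent chambers glue consistently (so $O(P)$ is a well-defined plane graph), then to check simplicity, and finally to establish $3$-connectivity.

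For the consistency of the gluings, the mirror-axis requirement in Definition~\ref{def:chamber} forces the subdivision induced by $C_T$ on each side $L_{i,j}$ of the defining triangle to be reflection-invariant. Two chambers of $P$ sharing an edge are related by a combinatorial reflection, so mirror-image copies of $O$ glue edge-to-edge along the shared edge, with matching vertex subdivisions and matching cyclic orders of incident edges at each boundary vertex. As a consequence, at every vertex $x$ of $O(P)$ whose star lies strictly inside a chamber of $P$ or crosses a single interior chamber edge, the local combinatorial picture at $x$ coincides with the corresponding one in the periodic tiling $T$.

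The only vertices where the local picture of $O(P)$ differs from that of $T$ are the images of the corners $v_0, v_1, v_2$ of the pattern. At such an image the number of copies of $O$ glued together is determined by $P$: twice the degree of the $0$-vertex at an image of $v_0$, twice the size of the face at an image of $v_2$, and exactly four at an image of $v_1$ (independent of $P$). In each case the local picture is the standard polyhedral star around a vertex, face centre, or edge midpoint of $P$. A loop or multiedge in $O(P)$ would therefore either already be visible in $T$ (impossible, since $T$ is a bona fide tiling with simple face boundaries) or be forced by two corners of a single chamber of $P$ being identified, which contradicts $P$ being a polyhedron. Hence $O(P)$ is simple. For $3$-connectivity I would take a hypothetical $2$-separator $\{u,w\}$ and project it via the natural map sending each vertex of $O(P)$ to the chamber or chambers of $P$ containing it. A case analysis on the types of $u$ and $w$ (interior to a chamber, on a boundary side, at a corner) then either contradicts the $3$-connectivity of $P$ or contradicts the fact that the pattern of $O$ is supported on a topological disk whose boundary contains all three corners and admits multiple internally disjoint paths between any two of its boundary points.

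The main obstacle is this last case analysis. Small faces of $P$ or vertices of low degree yield chambers that are glued into tight clusters, and one must rule out the possibility that such a cluster becomes $2$-separable after decoration. I would handle this by working inside the refined chamber system $C_{O(P)}$, where by construction every $1$-labelled vertex has the standard four-chamber neighbourhood, and combining this local uniformity with the global $3$-connectivity of $P$ to produce, via Menger, three internally disjoint paths between any prescribed pair of vertices of $O(P)$.
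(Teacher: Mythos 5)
Your setup (Steinitz, planarity from the construction, consistency of the gluing via the mirror-axis condition, and the observation that the local picture differs from the tiling $T$ only at images of $v_0,v_1,v_2$) is sound and consistent with what the paper does. The gap is in the $3$-connectivity argument, which is exactly the hard part of the theorem and which you leave as a plan rather than a proof. You propose to take a $2$-separator $\{u,w\}$, ``project'' it to chambers of $P$, and resolve matters by ``a case analysis'' and then ``via Menger, three internally disjoint paths'' --- but Menger does not \emph{produce} the paths, it only restates what must be shown, and the case analysis you defer is precisely where the content lies. The genuine difficulty is that a hypothetical $2$-cut of $O(P)$ is a global object: its two vertices may lie in chambers of $P$ that are far apart, or in chambers that are wrapped tightly around a small face or a low-degree vertex of $P$, and nothing in your sketch localizes the problem so that the $3$-connectivity of $P$ and of the tiling $T$ can actually be brought to bear. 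You acknowledge this yourself (``the main obstacle is this last case analysis''), so as written the proposal establishes simplicity and planarity but not $3$-connectivity.

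The paper avoids the global $2$-cut analysis entirely by using a local characterization: a plane graph $G$ is $3$-connected if and only if its barycentric subdivision $B_G$ has no $2$-cycles, no $3$-cycles other than chamber boundaries, and no $4$-cycles other than those bounding two chambers sharing an edge or four chambers sharing a $1$-vertex. Since $B_{O(P)}$ refines $B_P$, any such short cycle in $B_{O(P)}$ meets a reduced cyclic sequence of at most four chambers of $P$, and a finite check of the possible configurations shows that the cycle together with its interior lies in a bounded region that maps isomorphically onto a region of the barycentric subdivision of the $3$-connected tiling $T$; the forbidden cycles therefore cannot occur. This reduction of $3$-connectivity to a statement about cycles of length at most $4$ is the key idea your proposal is missing; with it, the verification becomes a bounded local check instead of an open-ended separator analysis. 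If you want to salvage your route, you would need an analogous localization step, for instance using the fact that in a $2$-connected plane graph the two vertices of any $2$-cut lie on a common face, and that every face of $O(P)$ is contained in a bounded union of chambers of $P$ --- but that still leaves real work to do, and the paper's criterion is the cleaner tool.
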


\begin{proof}

The fact that the resulting graph is plane is obvious. The fact that it is 3-connected
needs some more work. Let $G$ be a plane graph or tiling 
and $B_G$ the barycentric
subdivision of $G$.

It is well known and easy to prove that $G$ is $3$-connected if and only if $B_G$
has the following properties:

\begin{description}

\item[(i)] $B_G$ is a simple graph -- that is: there are no cycles of length $2$.

\item[(ii)]  The only cycles in $B_G$ of length $3$ are boundaries of chambers.

\item[(iii)]  The only cycles in $B_G$ of length $4$ have on one side two triangles sharing an edge
or $4$ triangles sharing a 1-vertex.

\end{description}

As the barycentric subdivision $B_{O(P)}$ is a subdivision of $B_P$, for each edge $e$
of $B_{O(P)}$ there is a chamber $C(e)$ of $B_P$ containing it. If the edge is included in
the boundary of a chamber, $C(e)$ is not unique. In order to prove that 
$O(P)$ is 3-connected, we have to prove that $B_{O(P)}$ has properties (i), (ii), and
(iii).

Figures~\ref{fig:for_proof} and \ref{fig:for_proof2} give
all possibilities for a 4-cycle in $B_P$ together with all chambers containing edges of the
cycles. If vertices displayed in these figures as distinct vertices would represent
the same vertex in $B_P$, this would contradict at least one of the properties
(i), (ii), and (iii), so all vertices shown are pairwise distinct.
Figures~\ref{fig:for_proof} and \ref{fig:for_proof2} also show corresponding
areas in the tiling for an lsp operation. From the definition of an operation (that is: the requirement
for the location of mirror planes), it follows that all vertices in the displayed area are pairwise distinct.
After the decoration, there is an isomorphism from the vertices of the barycentric subdivision $B_{O(P)}$
inside the drawn area onto the vertices of the barycentric subdivision of the tiling in the drawn area.

\begin{figure}
\begin{center}

\includegraphics[width=8.5cm]{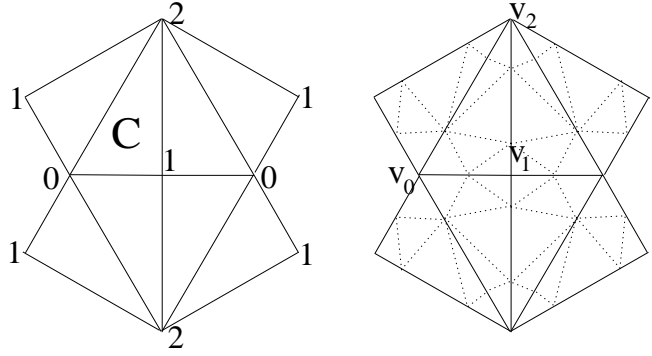}

\end{center}
\caption{ The first figure shows a 4-cycle around 4 chambers sharing a 1-vertex and all chambers sharing an edge of the cycle.
The figure also contains 4-cycles around two chambers intersecting in a 1-vertex and a 0-vertex or
intersecting in a 1-vertex and a 2-vertex. The second figure shows the corresponding
area in the tiling for an lsp operation. In the area for the lsp operation an example
tiling is drawn.}\label{fig:for_proof}
\end{figure}

\begin{figure}
\begin{center}

\includegraphics[width=8.5cm]{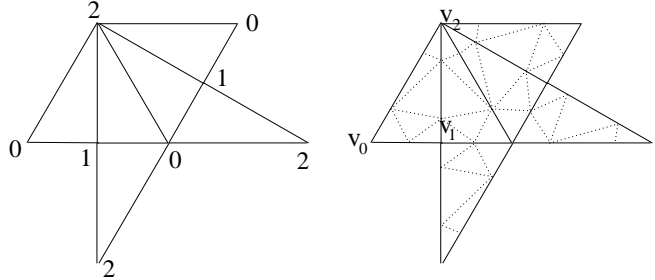}

\end{center}
\caption{ The first figure shows a 4-cycle around two chambers sharing the 0-vertex and the 2-vertex 
and all chambers sharing an edge of the cycle. The second figure shows the corresponding
area in the tiling for an lsp operation. In the area for the  lsp operation an example
tiling is drawn.}\label{fig:for_proof2}
\end{figure}

For a cycle $e_0,\dots ,e_k$ in $B_{O(P)}$ let $C(e_0), \dots ,C(e_k)=C_0,\dots ,C_k$ be the cyclic sequence of chambers 
of $B_P$, so that for $0\le i\le k$ edge $e_i$ is contained in $C(e_i)$. 
As long as the sequence of chambers has more than one chamber and we have
$C_i=C_{i+1}$ for some $i$ with the indices taken modulo $k+1$,
we can remove $C_i$ (and renumber the chambers). This way we get
the reduced sequence $C_0, \dots ,C_m$. Let now $e_0,\dots ,e_k$ be a cycle in $B_{O(P)}$
with length at most $4$. The  reduced sequence $C_0, \dots ,C_m$ also has length $m+1\le 4$.
If $m+1\le 2$, the cycle is completely contained in one chamber or in two chambers sharing an edge.
As the image of $e_0,\dots ,e_k$ under $\phi$ is a cycle of the same length, it must satisfy
the requirements described in (i),(ii), and (iii), so also $e_0,\dots ,e_k$ must satisfy these
requirements.

Assume now that $m+1\ge 3$. Depending on whether $e_0,\dots ,e_k$ crosses the border
between two chambers in a vertex or an edge, for $0\le i \le m$ the chambers $C_i$ and
$C_{i+1}$ (with the indices modulo $m+1$) share one or two vertices. Furthermore
each $C_i$ shares in total at least two vertices with the others. Checking the different
configurations of $3$ or $4$ chambers in a chambersystem that have these properties
one can prove that  there is a cycle $Z$ of length $m+1\ge 3$
in $B_P$ with an edge in each of $C_0, \dots ,C_m$.


If $m=2$, $Z$ is 
the boundary of a chamber in $B_P$. This implies that the cycle $e_0,\dots ,e_k$
lies completely in the region described by a chamber $C$ of $B_P$ and the
chambers sharing an edge with $C$, so that again the cycle together with its
interior can be mapped into the tiling by an isomorphism.

If $m=3$, $Z$ must either be the boundary of the union
of $2$  chambers sharing an edge or the boundary of
the union of $4$  chambers sharing a 1-vertex. 
In each case we can map the interior of $Z$ and the decoration of chambers neighbouring $Z$
isomorphically into the tiling, which implies that $e_0,\dots ,e_4$ satisfies property (iii).

Together these observations imply that $B_{O(P)}$ fulfills (i),(ii), and (iii)
and that $O(P)$ is 3-connected.

\end{proof}

\subsection*{\underline{A first step towards general local operations} \\ \underline{preserving orientation-preserving symmetries}}

Although the emphasis of this paper is on the work of Goldberg, Fuller, Caspar and Klug and on operations preserving all
symmetries, we want to make a first step -- details are postponed to a later paper -- 
towards operations that preserve orientation-preserving automorphisms
but not necessarily orientation-reversing automorphisms.
In the case of Goldberg, they could be described
by two subdivisions -- one of a black and one of a white triangle. In general, this is not possible, and it is necessary
to describe how a quadrangle containing a white and a black triangle that share an edge must be subdivided.

\begin{definition}\label{def:doublec}

Let $T$ be a periodic 3-connected tiling of the Euclidean plane 
with chamber system $C_T$,
and let $v_2,v_0,v_1,v'_0$ be points in the Euclidean plane so that:

\begin{itemize}

\item $v_2$ is the center of a rotation $\rho_{v_2}$ by $60$ degrees in counterclockwise
direction that is a symmetry of the tiling.

\item $v'_0=\rho_{v_2}(v_0)$ 

\item $v_1$ is the midpoint between $v'_0$ and $v_0$ and the center of
  a rotation $\rho_{v_1}$ by $180$ degrees that is a symmetry of the
  tiling (so also $v'_0=\rho_{v_1}(v_0)$).

\end{itemize}

Let $Q$ be a simple cycle in $C_T$ through $v_2,v_0,v_1,v'_0$ (in this order).
For $\{x,y\}\in \{\{v_2,v_0\},\{v_2,v'_0\},\{v_1,v'_0\},\{v_1,v_0\}\}$ 
let $P_{x,y}$ denote the path on $Q$ from 
$x$ to $y$ not containing any other vertex of  $\{v_2,v_0,v_1,v_0'\}$.

If $P_{v_2,v'_0}= \rho_{v_2}(P_{v_2,v_0})$ and $P_{v_1,v'_0}= \rho_{v_1}(P_{v_1,v_0})$,
then we call the quadrangle $Q$ with cornerpoints $v_2,v_0,v_1,v'_0$
labelled with the names $v_2,v_0,v_1,v_0'$
and subdivided  into chambers as given by $C_T$, 
a local operation that preserves orientation-preserving symmetries, {\em lopsp operation} for short.

Let $P$ be a polyhedron or tiling given as a chamber system. We call a pair of (black and white) chambers
sharing a $1$-vertex and a $2$-vertex a double chamber. Each chamber is contained in exactly one
double chamber.
The result of applying a lopsp operation $O$ to a  polyhedron or tiling $P$ is
given by subdividing each double chamber of $P$ with $O$ by
identifying $v_2$ with the $2$-vertex of the double chamber, $v_0$ with the $0$-vertex of the white chamber,
$v_1$ with the $1$-vertex of  the double chamber,
and $v'_0$ with the $0$-vertex of the black chamber.
Note that this operation is purely combinatorial and that the
angles of $O$ are not preserved.

\end{definition}

As already mentioned in the section about Goldberg's approach where the chiral case is discussed,
also here 
there are various ways to draw the boundaries of the quadrangle $Q$ that lead to 
equivalent operations, that is, operations that when applied to a polyhedron give the same 
result. We will postpone working out the details to a later paper.

It is tempting to conjecture that for each lopsp operation there is a way to choose the quadrangle
so that the subdivision of the double chamber can be expressed by two subdivisions of the chambers
-- one for the black chambers and one for the white chambers. Figure~\ref{fig:counterexample}
gives an example of an operation where this is not the case. In order to avoid misunderstandings,
we mention the fact already here, but a proof has to be postponed as first the necessary
prerequisites (e.g. results about different ways to choose the edges of the quadrangles
that lead to equivalent operations) have to be formally introduced and proven.

\begin{figure}
\begin{center}

\includegraphics[width=12cm]{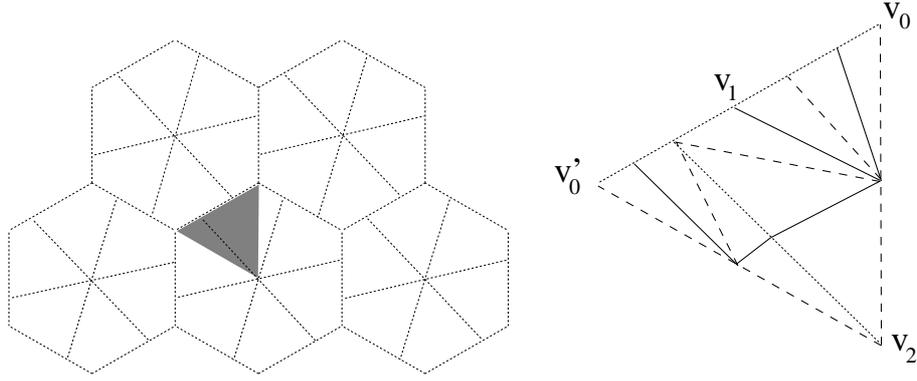}

\end{center}
\caption{A double chamber operation that cannot be described by separate subdivisions
of black and white chambers.}\label{fig:counterexample}
\end{figure}

Again, it must be shown that operations used in the literature are covered by this definition.
The mirror image plays a special role as an operation as it does not change the
combinatorial structure of a polyhedron. It is neither an lsp operation nor a 
lopsp operation. On the level of chamber systems it can be
described as changing the orientation of the chambers, so black ones become white
and the other way around.
We will show that {\em propellor}, {\em snub} and {\em whirl} 
can be obtained from tilings. Other operations (e.g. {\em gyro}) can be obtained by 
a combination of lsp operations and these lopsp operations. In Figure~\ref{fig:prop},Figure~\ref{fig:snub},
and  Figure~\ref{fig:whirl} the quadrangles $v_2,v_0,v_1,v_0'$  
are shown next to the tiling for better visibility. All chiral Caspar-Klug operations can be represented this way.

\begin{figure}
\begin{center}
\includegraphics[width=15cm]{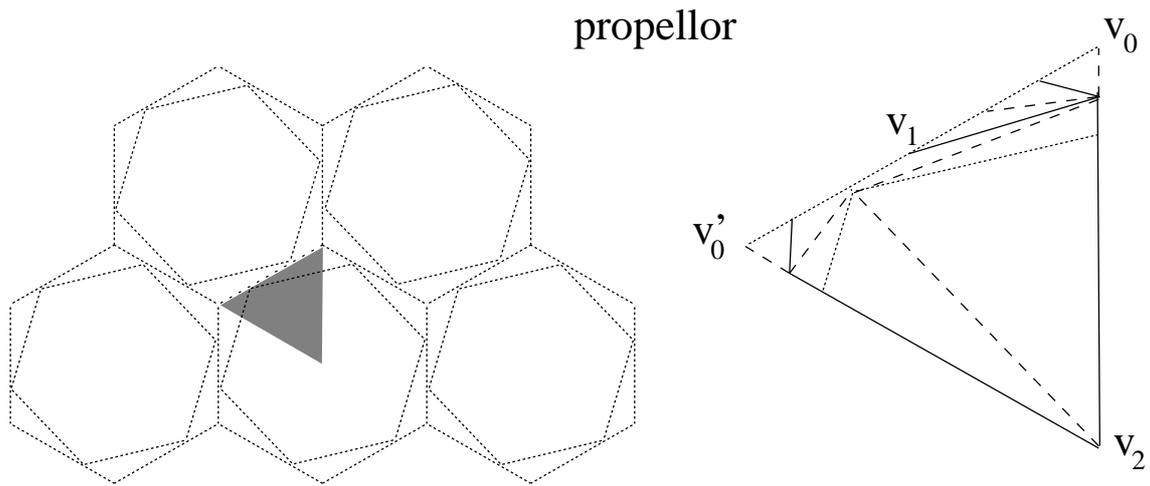}
\end{center}
\caption{ The lopsp operation {\em propellor}. }\label{fig:prop}
\end{figure}

\begin{figure}
\begin{center}
\includegraphics[width=15cm]{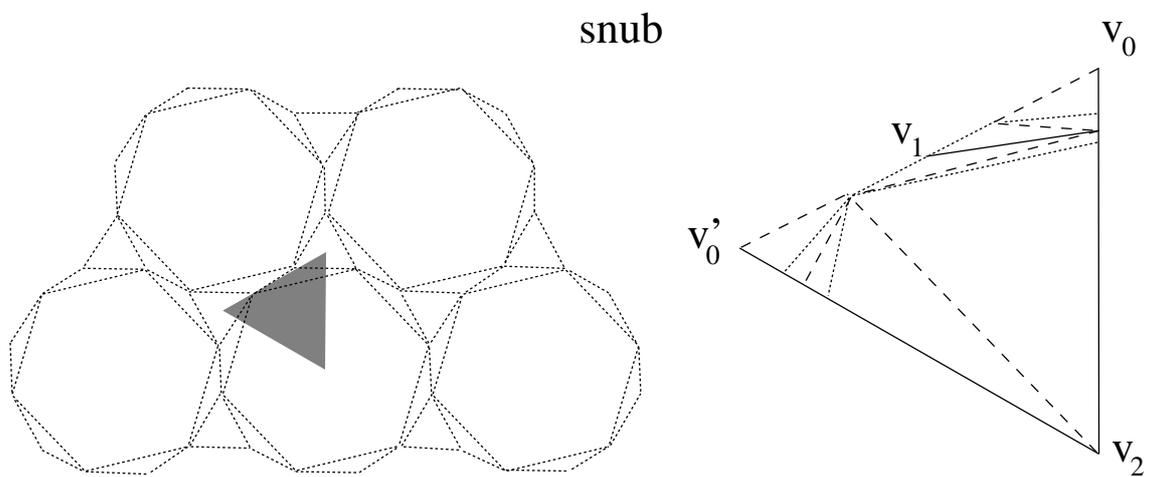}
\end{center}
\caption{ The lopsp operation {\em snub}. }\label{fig:snub}
\end{figure}

\begin{figure}
\begin{center}
\includegraphics[width=15cm]{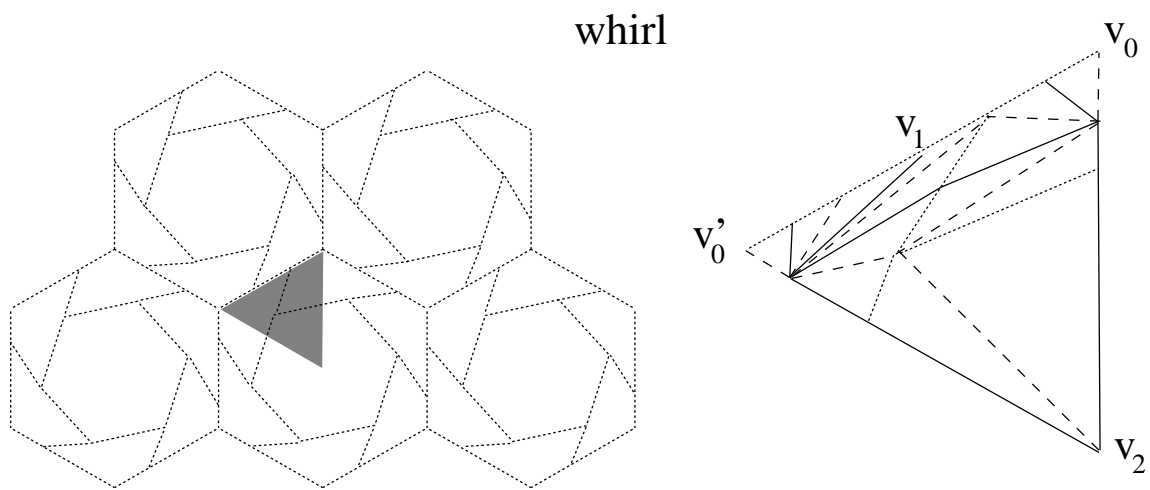}
\end{center}
\caption{ The lopsp operation {\em whirl}. }\label{fig:whirl}
\end{figure}

Although we could not find publications where decorations of double
chambers are used to represent lopsp operations like propellor, snub
or whirl, it may be considered common knowledge among people who work
with chamber systems that such a representation is possible. Again,
Definition~\ref{def:doublec} establishes the inverse direction and
gives a criterion for which decompositions of double chambers define an operation.

\section*{Conclusion and future work}

We have described the differences between the approaches of Goldberg,
Fuller and Caspar and Klug.  We have also made clear that what is
generally referred to as the Goldberg-Coxeter operation is due to
Caspar and Klug. Furthermore, we have explained the error in
Goldberg's paper and how it can be corrected.

By exactly defining -- based on Goldberg's approach -- local
symmetry-preserving operations (lsp) and local operations that preserve
orientation-preserving symmetries (lopsp), it becomes possible not only to study
specific operations but also to study the classes of all such
operations. These definitions make it possible to
design general proofs instead of examining each operation
individually (Theorem~\ref{thm:3con} is an example for this). 
The first task will now be to work out the details for lopsp operations, 
identifying which ways the boundary of the operation can be chosen when $v_2,v_0,v_1,v_0'$ are 
given, and proving that for each such choice the resulting decorated polyhedra are 
isomorphic. It is also necessary to prove a 
result corresponding to Theorem~\ref{thm:3con}.

The results presented in this paper also open new fields for investigation. Here are some
examples:

\begin{definition}

Two operations $O$, $O'$ are called equivalent if for all  polyhedra $P$ the results
of applying $O$ and $O'$ to $P$ are isomorphic. 

\end{definition}

While it is immediately evident that there are non-equivalent operations $O$, $O'$ that
produce isomorphic results for some polyhedra and non-isomorphic results
for others (see for example the operations {\em identity} and {\em dual} applied
to self-dual versus not self-dual polyhedra), it is not clear how exceptional
these cases are.
\medskip

{\bf Question:} 
 Are self-dual polyhedra the only polyhedra for which the application
of non-equivalent operations can give isomorphic results?
\medskip

This last question seems to be related to another one: lsp operations
preserve the symmetry group of the polyhedron to which they are
applied. In some cases they can also add new symmetries. An example is
the operation {\em ambo} applied to self-dual polyhedra. By studying
lsp operations applied to tilings of the torus, it is easy to see
that there exist operations that can increase symmetry but cannot be
written as a product of other operations. Consider for example a $4$-regular
toroidal tiling with $4$-gons where the group $Z_n\times Z_n$ is a
subgroup of the combinatorial symmetry group. If we apply the lsp
operation that subdivides each $4$-gon into $9$ smaller $4$-gons in a
$3\times 3$ pattern, we get $Z_{3n}\times Z_{3n}$ as a subgroup. The
number of chambers grows by a factor of $9$, while for all products
with {\em ambo} the numbers of chambers grow by an even factor -- so
the operation cannot be written as a product of ambo and other
operations.  Nevertheless, such examples are not known for polyhedra.

\medskip

{\bf Question:} 
Can all lsp operations that can increase the number of symmetries of a polyhedron
be written as a product of operations involving {\em ambo}?
\medskip

In order to measure the impact of an operation $O$ on the size or complexity of a  polyhedron, the ratios
of the numbers of vertices or faces before and after applying the operation are not good measures, 
as they do not depend on the operation alone. This can already be seen at the example of the
operation {\em dual}. Focussing on the edges of the polyhedra we get such an invariant:
the inflation rate $g(O)$, defined as the number of edges after applying the
operation to a polyhedron divided by the number of edges before the operation
is an invariant of the operation. 
It is equal to the number we would get by counting chambers before and after the operation
or by counting the number of chambers in an lsp operation $O$. For lopsp operations the inflation
rate is equal to half the number of chambers in the operation.
\medskip

{\bf Task:} Develop a computer program that can generate all
non-equivalent lsp and lopsp operations for a given inflation rate.
\medskip

Of course one could also ask for operations with a given inflation rate that produce only 
polyhedra with a given degree of the vertices or given face sizes.

Symmetric polyhedra and periodic tilings can be seen as containing redundant information,
as all the information is given by the structure of the fundamental domain and the symmetry group
distributing this information. This interpretation would not describe polyhedra with a trivial symmetry group
as redundant, but the situation is very similar to that of subdivisions of chambers in the role
of the fundamental domain and polyhedra in the role of the group:
when we apply operations with a large inflation factor to polyhedra with a trivial symmetry group,
in most cases the product will have a trivial symmetry group too and the information of the operation
-- the subdivided chamber -- is distributed via the chamber system of the polyhedron. 

\medskip

{\bf Task:} Develop an efficient algorithm that can determine whether a given polyhedron can be
described as an operation with inflation factor $g>1$ on a smaller polyhedron.
\medskip

\bibliographystyle{unsrt}
\bibliography{/home/gbrinkma/schreib/literatur}

\end{document}